\newtheorem{theorem}{Theorem}[section]
\newtheorem{corollary}[theorem]{Corollary}
\newtheorem{lemma}[theorem]{Lemma}
\newtheorem{prop}[theorem]{Proposition}
\theoremstyle{definition}
\newtheorem{defn}[theorem]{Definition}
\newtheorem{example}[theorem]{Example}
\begin{document}

\title{A Note on Rickart Modules}

\author{ Ali H. Al-Saedi}
\address{Ali H. Al-Saedi, Department of Mathematics, Oregon State University, Corvallis, OR 97331, USA.}
\email{alsaedia@math.oregonstate.edu.}

\author{Mehdi S. Abbas}
\address{Mehdi S. Abbas, Department of Mathematics, College of Science, Al-Mustansiriyah University, Baghdad, Iraq.}
\email{m.abass@uomustansiriyah.edu.iq.}

\begin{abstract}
We study the notion of Rickart property in a general module theoretic setting as a generalization to the concept of Baer modules and right Rickart rings. A module $M_{R}$ is called Rickart if the right annihilator in $M_{R}$ of each left principal ideal of $End_{R}(M)$ is generated by an idempotent. Characterizations of Rickart modules are given. We give sufficient conditions for direct sums of Rickart modules to be Rickart. We extend some useful results of right Rickart rings to the theory of Rickart modules. 
\end{abstract}

\keywords{Baer modules and rings, Rickart modules and rings}

\subjclass[2010]{16D10, 16D40, 16D80.}

\maketitle

%%%%%%%%%%%%%%%%%%%%%%%%%%%%%%%%%%%%%%%%%%%%%%%%%%
%%%%%%%%%%%%%%%%%%%%%%%%%%%%%%%%%%%%%%%%%%%%%%%%%%
%%%%%%%%%       Introduction         %%%%%%%%%%%%%
%%%%%%%%%%%%%%%%%%%%%%%%%%%%%%%%%%%%%%%%%%%%%%%%%%
%%%%%%%%%%%%%%%%%%%%%%%%%%%%%%%%%%%%%%%%%%%%%%%%%%

\section{Introduction}

Throughout this paper, $R$ will denote an associative ring with identity. The modules are all unital right modules and denoted by $M$, and the endomorphism ring by $S = End_{R}(M)$. In $1960$, Maeda \cite{maeda} defined Rickart rings in an arbitrary setting. A ring $R$ is called a \textit{right} (resp., \textit{left}) \textit{Rickart} ring if the right (resp., left) annihilator of any element in $R$ is generated by an idempotent as a right (resp., left) ideal of $R$. A ring $R$ is called a \textit{Rickart} ring if it is a right and left Rickart ring. A right (resp., left) Rickart ring has a homological characterization as right (resp., left) p.p. ring which means every principal right (resp., left) ideal is projective. $R$ is called \textit{von Neumann} regular if for each element $a$ in $R$ there exists $x$ in $R$ such that $axa = a$. A ring $R$ is called \textit{right} (resp., \textit{left}) \textit{semihereditary} if every finitely generated right (resp., left) ideal of $R$ is projective. In \cite{lee}, the definition of right Rickart rings is generalized to the module theoretic setting by G. Lee, S.T. Rizvi and C. Roman. A module $M_{R}$ is called \textit{Rickart} if the right annihilator of each left principal ideal of $End_{R}(M)$ is generated by an idempotent, i.e, for each $\varphi \in S$, there exists $e=e^{2}$ in $S=End_{R}(M)$ such that $r_{M}(\varphi)= eM$.  Kaplansky, in his writing ``\textit{Rings of operators}"\cite{kaplansky}, defined the concept of Baer rings. A ring $R$ is celled a \textit{right} \textit{Baer} ring if every right annihilator in $R$ is of the form $eR$ for some idempotent $e$ in $R$. \textit{left} \textit{Baer} rings are defined similarly \cite{kaplansky}. It is easy to see that right and left Baer rings are symmetric. In \cite{cosmin}, C.S. Roman and S.T. Rizvi generalized the definition of Baer rings to the scope of module theory. An $R$-module $M$ is called a \textit{Baer} module if for each left ideal $I$ of $S = End_{R}(M)$, $r_{M}(I) = eM$ for $e^{2} = e$ in $S$, equivalently, for each submodule $N$ of $M$, $l_{S}(N) =  Se$ for $e^{2} = e$ in $S$.  A module $M$ is said to have the \textit{summand intersection property} , shortly SIP, if the intersection of any two direct  summands of $M$ is a direct summand \cite{wilson}. An element $m$ in $M$ is called a \textit{singular} element of $M$ if the right ideal $r_{R}(m)$ is essential in $R_{R}$. The set of all singular elements of $M$ is denoted by $Z(M)$. $M_{R}$ is called a \textit{nonsingular} (resp., \textit{singular}) module if $Z(M)$ = 0 (resp., $Z(M) = M$). In particular, $R$ is called  \textit{right} \textit{nonsingular} if $Z(R_{R}) = 0$. \textit{Left} \textit{nonsingular} rings are defined similarly, and  \textit{nonsingular} ring  shall mean a ring that is both right and left nonsingular \cite{lam}. A module $M_{R}$ is called $K$-\textit{nonsingular} if, for all $\varphi$ in $S$, $r_{M}(\varphi) = Ker(\varphi)$ is essential in $M$ implies $\varphi = 0$ \cite{cosmin}. A module $ M_{R} $ is called \textit{retractable} if $ Hom_{R}(M,N)\neq 0$ for all nonzero submodules $N$ of $M$  ,  (or, equivalently, there exists $\varphi\in S$ with $Im(\varphi)$ is a submodule of $N$) \cite{cosmin}. A module $M_{R}$ is called \textit{quasi-injective}, if every submodule  $N$ of $M$ and homomorphism $ f\colon N \rightarrow M $  there exists a homomorphism $ g\colon M \rightarrow M $ such that $ g_{|N}=f $, i.e. $g(n) = f(n)$ for all $n$ in $N$  \cite{lam}. A submodule $N$ of a module $M_{R}$ is called \textit{closed} in $M$ if it has no proper essential extension in $M$ \cite{lam}. A module $M$ is called \textit{extending} if every submodule of $M$ is essential in a direct summand (or, equivalently, each closed submodule of $M$ is a direct summand). A ring   $R$ is called a \textit{right} \textit{extending} if it is extending as right $R$-module[6].

%%%%%%%%%%%%%%%%%%%%%%%%%%%%%%%%%%%%%%%%%%%%%%%%%%
%%%%%%%%%%%%%%%%%%%%%%%%%%%%%%%%%%%%%%%%%%%%%%%%%%
%%%%%%%%%         Rickart Modules    %%%%%%%%%%%%%
%%%%%%%%%%%%%%%%%%%%%%%%%%%%%%%%%%%%%%%%%%%%%%%%%%
%%%%%%%%%%%%%%%%%%%%%%%%%%%%%%%%%%%%%%%%%%%%%%%%%%

\section{Rickart Modules}
In this section we study the basic properties of Rickart modules. In \cite{lee}, it is shown that a direct summand of a Rickart module is also a Rickart module, but the direct sum of Rickart modules does not necessary inherit the Rickart property. We give sufficient conditions under which  the direct sum of Rickart modules has the Rickart property. As Rickart modules are a proper generalization for Baer modules, we also give a sufficient condition for which Baer and Rickart properties are equivalent.
 
\begin{defn}[\cite{lee}, Definition\;2.2]
Let $M$ be a right $R$-module and let $S = End_{R}(M)$. Then $M$  is called a \textit{Rickart} \textit{module} if the right annihilator in $M$ of any single element of $S$ is generated by an idempotent of $S$. Equivalently, for all $\varphi$ in  $S$,  $r_{M}(\varphi) = Ker(\varphi)= eM$ for some $e^{2}= e$ in $S$. 
\end{defn}

\begin{prop}\label{th1}
A right $R$-module $M$ is Rickart if and only if any two direct summands $A$ and $B$ of $ M $ and each $R$-homomorphism $ f\colon A \rightarrow B $, then  $Ker(f)$ is a direct summand of $A$. 
\end{prop}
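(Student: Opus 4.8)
The plan is to prove the two implications separately; the backward direction is essentially a specialization, while the forward direction needs one small construction together with a modularity argument.

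For the ``if'' direction, I would apply the stated condition with $A = B = M$ (both are direct summands of $M$) and with $f = \varphi$ for an arbitrary $\varphi \in S = End_R(M)$. This immediately gives that $Ker(\varphi) = r_M(\varphi)$ is a direct summand of $M$; writing $M = Ker(\varphi) \oplus L$ and letting $e \in S$ be the projection onto $Ker(\varphi)$ along $L$ produces an idempotent with $eM = r_M(\varphi)$, so $M$ is Rickart.

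For the ``only if'' direction, assume $M$ is Rickart, fix direct summands $A, B$ of $M$ with $M = A \oplus A'$ and $M = B \oplus B'$, and let $f \colon A \to B$ be an $R$-homomorphism. The idea is to realize $f$ inside $S$: set $\varphi = \iota \circ f \circ \pi$, where $\pi \colon M \to A$ is the projection with $Ker(\pi) = A'$ and $\iota \colon B \to M$ is the inclusion. A routine check (using injectivity of $\iota$, that $Ker(f) \subseteq A$, and $Ker(\pi) = A'$) shows $Ker(\varphi) = Ker(f) \oplus A'$. Since $M$ is Rickart, $Ker(\varphi) = eM$ for some $e^2 = e$ in $S$, so $Ker(f) \oplus A'$ is a direct summand of $M$, with complement $C := (1-e)M$.

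The crux is then to descend from ``$Ker(f) \oplus A'$ is a summand of $M$'' to ``$Ker(f)$ is a summand of $A$'', and I expect this to be the one step requiring care, since superficially it looks as though one needs the summand intersection property, which is not assumed. The way around it is to observe that $A'$ splits off cleanly: one verifies $M = A' \oplus \big(Ker(f) \oplus C\big)$ --- the sum is all of $M$ because it contains $\big(Ker(f) \oplus A'\big) + C$, and the intersection vanishes because a common element, after subtracting its $Ker(f)$-component, lies in $eM \cap (1-e)M = 0$ and hence in $A' \cap A = 0$. Thus $Ker(f) \oplus C$ and $A$ are both complements of $A'$ in $M$, so $\pi$ restricts to an isomorphism of $Ker(f) \oplus C$ onto $A$; since $\pi$ is the identity on $Ker(f) \subseteq A$, this gives $A = Ker(f) \oplus \pi(C)$, completing the argument.
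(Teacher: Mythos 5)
Your proof is correct and follows essentially the paper's own argument: extend $f$ to $\varphi=\iota\circ f\circ\pi\in S$, observe $Ker(\varphi)=Ker(f)\oplus A'$, apply the Rickart hypothesis to get this as a direct summand of $M$, and in the converse direction take $A=B=M$. The only divergence is the final descent to $A$: where you argue that $Ker(f)\oplus C$ and $A$ are both complements of $A'$ and transport the decomposition through the isomorphism $\pi|_{Ker(f)\oplus C}$, the paper simply notes $Ker(f)\leq^{\oplus}Ker(\varphi)\leq^{\oplus}M$, hence $M=Ker(f)\oplus D$, and invokes the modular law to conclude $A=A\cap M=Ker(f)\oplus(A\cap D)$ --- a slightly shorter route which, like yours, needs no appeal to the summand intersection property.
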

\begin{proof}
Let $ f \in  Hom_{R}(A,B)$, then $f$ can be extended to $f^{\prime} = f\pi$ where $\pi$ be the projection map onto $A$.  It is easy to check that $Ker(f^{\prime}) = Ker(f)\oplus C$  for $M=A\oplus C$ where $C$ is a direct summand submodule of $M$. Since $M$ is \textit{Rickart}, then $Ker(f^{\prime}) \leq^{\oplus} M$, and hence $Ker(f)\leq^{\oplus} Ker(f^{\prime})\leq^{\oplus} M$ which implies $Ker(f)\leq^{\oplus} M$. Thus  $M=Ker(f)\oplus D$ for some direct summand submodule $D$ of $M$. Since $Ker(f)\subseteq A$, and by the help of modular law we obtain that $A= A\cap M = Ker(f)\oplus A\cap D$. Therefore, $Ker(f)\leq^{\oplus} A$. Conversely, put $A=B=M$.
\end{proof}

\begin{theorem}[\cite{lee},Theorem\;2.7]\label{th2}
A direct summand of Rickart module is a Rickart module. 
\end{theorem}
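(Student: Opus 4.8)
The plan is to reduce the statement to Proposition \ref{th1}. Let $M$ be a Rickart module and let $N$ be a direct summand of $M$, say $M = N \oplus K$, and write $T = End_{R}(N)$. To prove that $N$ is Rickart I must show that for every $\varphi \in T$ there is an idempotent $e = e^{2} \in T$ with $r_{N}(\varphi) = Ker(\varphi) = eN$.

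First I would observe that such a $\varphi$ is in particular an $R$-homomorphism between the two direct summands $N$ and $N$ of $M$. Since $M$ is Rickart, Proposition \ref{th1} applied with $A = B = N$ and $f = \varphi$ immediately yields that $Ker(\varphi) \leq^{\oplus} N$; that is, $N = Ker(\varphi) \oplus L$ for some submodule $L$ of $N$.

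Next I would produce the required idempotent: let $e \colon N \to N$ be the projection onto $Ker(\varphi)$ along $L$. Then $e = e^{2} \in T$ and $eN = Ker(\varphi)$, which is precisely the Rickart condition for $N$, so the proof is complete.

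Since the real work is already packaged in Proposition \ref{th1}, there is essentially no obstacle here. If one prefers an argument that does not quote Proposition \ref{th1}, the only nontrivial point is a standard extension-and-modular-law manipulation: let $\pi \colon M \to N$ be the projection and $\iota \colon N \to M$ the inclusion, set $\widetilde{\varphi} = \iota \varphi \pi \in S$, note that $Ker(\widetilde{\varphi}) = Ker(\varphi) \oplus K$, use the Rickart property of $M$ to write $Ker(\widetilde{\varphi}) = fM$ for an idempotent $f \in S$, and then strip off the summand $K$ and intersect with $N$ via the modular law to exhibit $Ker(\varphi)$ as a direct summand of $N$. Either route gives a short proof.
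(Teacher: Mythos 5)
Your proof is correct and follows essentially the same route as the paper, which simply sets $A=B$ (your $A=B=N$) in Proposition \ref{th1} and reads off the kernel as a direct summand of $N$, the projection onto it providing the required idempotent of $End_{R}(N)$. The alternative argument you sketch via $\widetilde{\varphi}=\iota\varphi\pi$ and the modular law is just the proof of Proposition \ref{th1} itself, so nothing new is needed there.
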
 
Theorem \ref{th2} follows easily by setting $A=B$ in Proposition \ref{th1} 
\begin{example}
A finite direct sum of Rickart modules is not necessarily a Rickart module. The $\mathbb{Z}$-module $\mathbb{Z}\oplus Z_{2}$ is not Rickart while $\mathbb{Z}$ and $Z_{2}$ are both Rickart $\mathbb{Z}$-modules. Consider $f:\mathbb{Z}\rightarrow Z_{2}$ defined by $f(x)=\bar{x}$ for all  $x\in \mathbb{Z}$. Thus $Ker(f)=2\mathbb{Z}$ is not a direct summand of $\mathbb{Z}$. 
\end{example}
Recall that a module $M$ is called $N$-\textit{Rickart} (or \textit{relatively} \textit{Rickart} \textit{to} $N$) if, for every homomorphism $\varphi:M\rightarrow N$, $Ker(\varphi)\leqslant^{\oplus} M$.
Now, we give sufficient conditions for the direct sums of Rickart modules to be Rickart. 
\begin{theorem}\label{th3}
Let $M_{1}$ and $M_{2}$ be Rickart $R$-modules.\;If we have the following conditions
\begin{itemize}
\item[(1)]$ N\leqslant M_{1}\oplus M_{2}\Rightarrow N=N_{1}\oplus N_{2}$, where $N_{i}\leqslant M_{i}$ for $i=1,2$,
\item[(2)] $M_{i}\; and\; M_{j}\; are\; relatively\; Rickart\; modules\; for\; i,j=1,2,$ 
\end{itemize}
then $M=M_{1}\oplus M_{2}$ is a Rickart module.
\end{theorem}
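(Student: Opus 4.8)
The plan is to verify the defining property directly. Since a submodule of $M$ is a direct summand exactly when it has the form $eM$ for an idempotent $e\in S$, it suffices to prove that $Ker(\varphi)\leqslant^{\oplus}M$ for every $\varphi\in S=End_{R}(M)$. Fix such a $\varphi$, and let $\pi_{1},\pi_{2}$ denote the canonical projections of $M=M_{1}\oplus M_{2}$ onto $M_{1},M_{2}$.

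Before the main argument I would record one routine fact: relative Rickartness is inherited by direct summands, i.e. if $A\leqslant^{\oplus}X$ and $X$ is $Y$-Rickart, then $A$ is $Y$-Rickart. The proof mimics that of Proposition~\ref{th1}: given $g\colon A\to Y$, extend it to $g\pi\colon X\to Y$ along the projection $\pi\colon X\to A$; then $Ker(g\pi)=Ker(g)\oplus A'$ where $X=A\oplus A'$, so $Ker(g)\leqslant^{\oplus}Ker(g\pi)\leqslant^{\oplus}X$ because $X$ is $Y$-Rickart, and finally $Ker(g)\leqslant^{\oplus}A$ by the modular law since $Ker(g)\subseteq A$.

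Now for the main steps. Applying hypothesis~(1) to $N:=Ker(\varphi)$ gives $N=N_{1}\oplus N_{2}$ with $N_{i}\leqslant M_{i}$; using $M_{1}\cap M_{2}=0$ inside $M$ one checks that necessarily $N_{i}=N\cap M_{i}=Ker(\varphi|_{M_{i}})$. The heart of the proof is to show $N_{1}\leqslant^{\oplus}M_{1}$ (and symmetrically $N_{2}\leqslant^{\oplus}M_{2}$). Write $N_{1}=Ker(\pi_{1}\varphi|_{M_{1}})\cap Ker(\pi_{2}\varphi|_{M_{1}})$. Since $\pi_{1}\varphi|_{M_{1}}\in End_{R}(M_{1})$ and $M_{1}$ is Rickart, the submodule $A:=Ker(\pi_{1}\varphi|_{M_{1}})$ is a direct summand of $M_{1}$. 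By the fact above (with $X=M_{1}$, $Y=M_{2}$, transferring the $M_{2}$-Rickartness of $M_{1}$ given by~(2) down to $A$), $A$ is $M_{2}$-Rickart, so the kernel of $(\pi_{2}\varphi|_{M_{1}})|_{A}\colon A\to M_{2}$ is a direct summand of $A$; but this kernel equals $A\cap Ker(\pi_{2}\varphi|_{M_{1}})=N_{1}$, whence $N_{1}\leqslant^{\oplus}A\leqslant^{\oplus}M_{1}$. Writing $M_{1}=N_{1}\oplus N_{1}'$ and $M_{2}=N_{2}\oplus N_{2}'$ then yields
\[
M=M_{1}\oplus M_{2}=(N_{1}\oplus N_{2})\oplus(N_{1}'\oplus N_{2}')=Ker(\varphi)\oplus(N_{1}'\oplus N_{2}'),
\]
so $Ker(\varphi)\leqslant^{\oplus}M$ and $M$ is Rickart.

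The delicate points, and where I expect the real work to lie, are: (a) observing that hypothesis~(1) pins $N_{i}$ down as $Ker(\varphi|_{M_{i}})$ rather than leaving it an unspecified summand of $N$; and (b) guaranteeing that the summand $A\leqslant^{\oplus}M_{1}$ is still $M_{2}$-Rickart, so that hypothesis~(2) can be brought to bear on it — this is exactly why the preliminary inheritance fact is needed. Everything else is bookkeeping with the projections $\pi_{i}$ and the modular law.
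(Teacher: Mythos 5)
Your proof is correct, and it follows the paper's outline up to the decisive step: like the paper, you apply hypothesis (1) to $N=Ker(\varphi)$, identify $N_{1}$ as $Ker(\varphi|_{M_{1}})=Ker(\pi_{1}\varphi|_{M_{1}})\cap Ker(\pi_{2}\varphi|_{M_{1}})$ (the paper's $Ker(\varphi_{11})\cap Ker(\varphi_{21})$ in matrix notation), and reduce everything to showing this intersection is a summand of $M_{1}$. Where you diverge is precisely there. The paper notes that $Ker(\varphi_{11})\leqslant^{\oplus}M_{1}$ by Rickartness of $M_{1}$ and $Ker(\varphi_{21})\leqslant^{\oplus}M_{1}$ by hypothesis (2), and then quotes the fact that a Rickart module has the summand intersection property ([\cite{lee}, Proposition 2.16]) to conclude the intersection is a summand. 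You instead avoid SIP altogether: you restrict $\pi_{2}\varphi|_{M_{1}}$ to the summand $A=Ker(\pi_{1}\varphi|_{M_{1}})$ and use your preliminary lemma that relative Rickartness passes to direct summands of the domain (proved exactly as in Proposition \ref{th1}), so that $N_{1}=A\cap Ker(\pi_{2}\varphi|_{M_{1}})\leqslant^{\oplus}A\leqslant^{\oplus}M_{1}$. Both arguments are sound; the paper's is shorter given the cited SIP result, while yours is self-contained, uses hypothesis (2) more directly, and in fact only needs the relative-Rickart hypotheses together with Rickartness of each $M_{i}$, without appealing to any structural property of Rickart modules beyond the kernel condition itself. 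Your identification $N_{i}=N\cap M_{i}=Ker(\varphi|_{M_{i}})$, which the paper establishes by the explicit matrix computation, is also verified correctly via $M_{1}\cap M_{2}=0$.
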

\begin{proof}
Let $S=End_{R}(M)=End_{R}(M_{1}\oplus M_{2})$ and $\varphi \in S$.\;Then by (1), $Ker(\varphi)=N_{1}\oplus N_{2}$ where $N_{i}\leqslant M_{i}$, $i=1,2$. Let $S_{i}=End_{R}(M_{i})$ for $i=1,2$. Since $S$ can be written in the following matrix form 
$ $ \[S =
 \begin{pmatrix}
  S_{1} & Hom_{R}(M_{2},M_{1})\\
  Hom_{R}(M_{1},M_{2}) & S_{2}\\
 \end{pmatrix}, 
 \] $ $
then set $\varphi=(\varphi_{ij})_{i,j=1,2}$ with $\varphi_{ij}:M_{j}\to M_{i}$.\;Let $N_{1}^{\prime}=r_{M_{1}}(S_{1}\varphi_{11})=Ker(\varphi_{11})$. We now show that $N_{1}=N_{1}^{\prime}\cap Ker(\varphi_{21})$.  We note that 

\begin{align*}
x \in N_{1} \iff & (x,0)\in N_{1}\oplus N_{2}=Ker(\varphi) \\
\iff & \begin{pmatrix}
 \varphi_{11} & \varphi_{12}\\
 \varphi_{21} & \varphi_{22}\\
 \end{pmatrix}
 \begin{pmatrix}
  x\\0
 \end{pmatrix}
 =
 \begin{pmatrix}
 0\\0
 \end{pmatrix} \\
\iff &  \;\varphi_{11}(x)=\varphi_{21}(x)=0 \\
\iff &  \;x\in N_{1}^{\prime}\; and\; x\in Ker(\varphi_{21}) \\
\iff & \;x\in N_{1}^{\prime}\cap Ker(\varphi_{21}) \\
\iff & \;N_{1}=N_{1}^{\prime}\cap Ker(\varphi_{21}).
\end{align*}
By(2), $Ker(\varphi_{21})\leqslant^{\oplus}M_{1}$. By [\cite{lee},Proposition 2.16], $M_{1}$ has SIP, thus $N_{1}=N_{1}^{\prime}\cap Ker(\varphi_{21})\leqslant^{\oplus}M_{1}$. Similarly, $N_{2}=N_{2}^{\prime}\cap Ker(\varphi_{12})\leqslant^{\oplus}M_{2}$. This implies $r_{M_{1}\oplus M_{2}}(\varphi)=N_{1}\oplus N_{2}\leqslant^{\oplus} M_{1}\oplus M_{2}$. 

\end{proof}

\begin{corollary}\label{th6}
Let $M_{1}$ and $M_{2}$ be Rickart $R$-modules. If we have the following conditions
\begin{itemize}
\item[(1)]$r_{R}(M_{1})+r_{R}(M_{2})=R,$
\item[(2)]$M_{i}\; and\; M_{j}\; are\; relatively\; Rickart\; modules\; for\; i,j=1,2,$ 
\end{itemize}
then $M=M_{1}\oplus M_{2}$ is a Rickart module.
\end{corollary}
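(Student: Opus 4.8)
The plan is to deduce the corollary from Theorem \ref{th3}. Condition (2) of the corollary is literally condition (2) of that theorem, so the only thing to establish is that the arithmetic hypothesis $r_{R}(M_{1})+r_{R}(M_{2})=R$ forces condition (1), i.e. that every submodule of $M_{1}\oplus M_{2}$ splits along the decomposition $M_{1}\oplus M_{2}$. (Incidentally, condition (2) is redundant here: as will appear below, the hypothesis yields $Hom_{R}(M_{1},M_{2})=0=Hom_{R}(M_{2},M_{1})$, so every homomorphism between $M_{1}$ and $M_{2}$ is $0$ and thus has its whole domain as kernel; but keeping it lets me quote Theorem \ref{th3} verbatim.)

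The engine is the following Hom-vanishing remark, which I would prove first. If $r_{R}(M_{1})+r_{R}(M_{2})=R$, then $Hom_{R}(A,M_{2}/B)=0$ for every submodule $A\leqslant M_{1}$ and every submodule $B\leqslant M_{2}$, and symmetrically with the roles of $M_{1}$ and $M_{2}$ interchanged. Indeed $r_{R}(A)\supseteq r_{R}(M_{1})$ and $r_{R}(M_{2}/B)\supseteq r_{R}(M_{2})$, so $r_{R}(A)+r_{R}(M_{2}/B)=R$; for an $R$-linear map $g\colon A\to M_{2}/B$ and $x\in A$, we have $xt=0$ and hence $g(x)t=g(xt)=0$ for all $t\in r_{R}(A)$, while $g(x)\cdot r_{R}(M_{2}/B)=0$ since $g(x)\in M_{2}/B$; therefore $g(x)R=0$ and $g(x)=g(x)\cdot 1=0$, so $g=0$. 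Taking $B=0$ (and the symmetric statement) gives $Hom_{R}(M_{1},M_{2})=0=Hom_{R}(M_{2},M_{1})$.

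Next I would verify condition (1) of Theorem \ref{th3}. Fix a submodule $N\leqslant M=M_{1}\oplus M_{2}$, let $\pi_{i}\colon M\to M_{i}$ be the canonical projections, put $N_{i}=\pi_{i}(N)\leqslant M_{i}$, and identify $M_{i}$ with its canonical copy in $M$, so that $N\cap M_{1}$ and $N\cap M_{2}$ make sense inside $M$. Since $N\subseteq N_{1}\oplus N_{2}$ always, it suffices to show $N_{1}\oplus 0\subseteq N$ and $0\oplus N_{2}\subseteq N$. For $a\in N_{1}$ choose $b\in M_{2}$ with $(a,b)\in N$; any two such choices of $b$ differ by an element of $N\cap M_{2}$, so $\psi(a):=b+(N\cap M_{2})$ is a well-defined $R$-linear map $\psi\colon N_{1}\to M_{2}/(N\cap M_{2})$. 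Applying the remark with $A=N_{1}\leqslant M_{1}$ and $B=N\cap M_{2}\leqslant M_{2}$ gives $\psi=0$, so for every $(a,b)\in N$ we have $b\in N\cap M_{2}$, whence $(0,b)\in N$ and $(a,0)=(a,b)-(0,b)\in N$; thus $N_{1}\oplus 0\subseteq N$. The mirror argument gives $0\oplus N_{2}\subseteq N$, and hence $N=N_{1}\oplus N_{2}$ with $N_{i}\leqslant M_{i}$, which is condition (1). Now Theorem \ref{th3} applies, so $M=M_{1}\oplus M_{2}$ is Rickart.

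I expect the only genuine obstacle to be the submodule-splitting in the third paragraph; there the delicate point is that one must use the hypothesis in its full strength — its stability under passing simultaneously to submodules of $M_{1}$ and to quotients of $M_{2}$ — rather than the superficially similar but strictly weaker statement $Hom_{R}(M_{1},M_{2})=0=Hom_{R}(M_{2},M_{1})$, which on its own does not force arbitrary submodules of $M_{1}\oplus M_{2}$ to decompose.
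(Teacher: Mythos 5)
Your proposal is correct, and its overall shape matches the paper's: both deduce the corollary from Theorem \ref{th3} by checking that the hypothesis $r_{R}(M_{1})+r_{R}(M_{2})=R$ forces every submodule $N\leqslant M_{1}\oplus M_{2}$ to decompose as $N=N_{1}\oplus N_{2}$ with $N_{i}\leqslant M_{i}$. The difference is in how that decomposition is obtained: the paper's proof is a one-line citation of an external result ([\cite{mehdi}, Proposition 4.2]), whereas you prove it from scratch via your Hom-vanishing observation ($Hom_{R}(A,M_{2}/B)=0$ for all $A\leqslant M_{1}$, $B\leqslant M_{2}$, since $r_{R}(A)+r_{R}(M_{2}/B)=R$), applied to the map $N_{1}\to M_{2}/(N\cap M_{2})$, $a\mapsto b+(N\cap M_{2})$. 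Your argument is correct in every step (well-definedness and $R$-linearity of $\psi$, and the conclusion $N_{1}\oplus 0,\,0\oplus N_{2}\subseteq N$, hence $N=N_{1}\oplus N_{2}$), so what your route buys is self-containedness: the reader does not need access to the cited proposition, and you correctly isolate the point that the full strength of the annihilator hypothesis (its stability under submodules and quotients) is what is needed, not merely $Hom_{R}(M_{1},M_{2})=0=Hom_{R}(M_{2},M_{1})$. Your side remark that condition (2) is redundant under hypothesis (1) is also correct --- the Hom groups vanish, so relative Rickartness is automatic --- and is a small sharpening the paper does not state.
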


\begin{proof}
Since $r_{R}(M_{1})+r_{R}(M_{2})=R$, then by [\cite{mehdi}, Proposition 4.2] any submodule $N$ of $M_{1}\oplus M_{2}$ can be written in the form $N=N_{1}\oplus N_{2}$ for $N_{1}\leqslant M_{1}$ and $N_{2}\leqslant M_{2}$. 
\end{proof}

\section{Endomorphism Rings}
There is an interesting connection between Rickart modules and their endomorphism rings. In fact, Rickart modules provide a source for Rirckart rings by considering their endomorphism rings. This connection makes the definition of Rickart modules more valuable. 

\begin{prop}[\cite{lee}, Proposition 3.2]\label{th7} 
If $M$ is a Rickart $R$-module then $S=End_{R}(M)$ is a right Rickart ring.
\end{prop}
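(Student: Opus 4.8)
The plan is to unwind the definition of a right Rickart ring directly from the Rickart hypothesis on $M$. Recall from the introduction that $S$ is right Rickart precisely when, for every $\varphi \in S$, the right annihilator $r_{S}(\varphi) = \{\psi \in S : \varphi\psi = 0\}$ is of the form $eS$ for some idempotent $e = e^{2} \in S$. So fix $\varphi \in S$. Since $M$ is Rickart, there is an idempotent $e = e^{2} \in S$ with $Ker(\varphi) = r_{M}(\varphi) = eM$. I claim this same $e$ does the job, i.e. $r_{S}(\varphi) = eS$.

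The key elementary observation is that, for $\psi \in S$, one has $\varphi\psi = 0$ if and only if $Im(\psi) \subseteq Ker(\varphi) = eM$. With this in hand, the inclusion $eS \subseteq r_{S}(\varphi)$ is immediate: for any $\theta \in S$ we have $Im(e\theta) \subseteq Im(e) = eM = Ker(\varphi)$, hence $\varphi(e\theta) = 0$, so $e\theta \in r_{S}(\varphi)$.

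For the reverse inclusion, take $\psi \in r_{S}(\varphi)$, so $Im(\psi) \subseteq eM$. Since $e$ is idempotent, it acts as the identity on $eM = Im(e)$; therefore $e\psi(m) = \psi(m)$ for every $m \in M$, that is, $\psi = e\psi \in eS$. Combining the two inclusions gives $r_{S}(\varphi) = eS$ with $e^{2} = e \in S$, and since $\varphi \in S$ was arbitrary, $S$ is a right Rickart ring.

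There is no genuine obstacle here; the only point that requires a little care is the passage between the module-level annihilator $r_{M}(\varphi)$ supplied by the Rickart hypothesis and the ring-level annihilator $r_{S}(\varphi)$ demanded by the definition of a right Rickart ring. This passage is handled entirely by the equivalence $\varphi\psi = 0 \iff Im(\psi) \subseteq Ker(\varphi)$ together with the fact that an idempotent endomorphism restricts to the identity on its image.
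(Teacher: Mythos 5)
Your argument is correct and is essentially the standard proof of this fact (the paper itself only cites \cite{lee}, Proposition 3.2, where the same computation $Ker(\varphi)=eM \Rightarrow r_{S}(\varphi)=eS$ is carried out). The two inclusions via $\varphi\psi=0 \iff Im(\psi)\subseteq Ker(\varphi)$ and the fact that $e$ restricts to the identity on $eM$ are exactly the needed points, so nothing is missing.
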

On the other hand, the fact that the endomorphism ring of a module is Rickart does not imply that the module itself is Rickart module.
\begin{example}
Let $M=\mathbb{Z}_{p^{\infty}}$, considered as $\mathbb{Z}$-module where $p$ is a prime number. It is well known that $End_{\mathbb{Z}}(M)$ is the ring of p-adic integers (\cite{fuchs}, example 3, page 216). Since the ring of p-adic integers is a commutative domain, it is Rickart. However, $M=\mathbb{Z}_{p^{\infty}}$ is not Rickart (see \cite{lee}, example 2.17).
\end{example}
Recall that a module $M_{R}$ is said to be retractable if $Hom_{R}(M,N)\neq 0$, for all $0\neq N\leqslant M$(or, equivalently, there exists $\varphi \in S$ with $Im(\varphi)\subseteq N)$ \cite{cosmin}. For a module $M$ being retractable, it gives the equivalency of Rickart property for the module $M$ and its endomorphism ring $S=End_{R}(M)$ in the following theorem.
\begin{prop}[\cite{lee}, Proposition 3.5]
Let $M$ be a (quasi-)retractable module. Then the following conditions are equivalent:
\begin{itemize} 
\item[(a)] $M$ is a Rickart module;
\item[(b)] $End_{R}(M)$ is a right Rickart ring.
\end{itemize}
\end{prop}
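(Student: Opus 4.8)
The plan is to get one direction essentially for free and to prove the converse by a hands-on annihilator computation. For (a) $\Rightarrow$ (b) there is nothing new to do: this is exactly Proposition \ref{th7}, whose conclusion is that $End_R(M)$ is right Rickart whenever $M$ is Rickart, and that argument never invokes retractability.

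For (b) $\Rightarrow$ (a), fix $\varphi \in S$; we must exhibit an idempotent $e \in S$ with $r_M(\varphi) = eM$. Since $S$ is right Rickart, $r_S(\varphi) = eS$ for some $e = e^2 \in S$, and I claim this same $e$ works module-theoretically. One inclusion is immediate: $\varphi e = 0$ gives $eM \subseteq Ker(\varphi) = r_M(\varphi)$. Writing $M = eM \oplus (1-e)M$ and using that $eM$ already lies in $r_M(\varphi)$, the modular law yields $r_M(\varphi) = eM \oplus N$ with $N := r_M(\varphi) \cap (1-e)M$, so the whole problem collapses to showing $N = 0$.

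This last step is where (quasi-)retractability is used, and it is the only delicate point. The trick is to pick the right left ideal of $S$ to test: take $I = S\varphi + Se$. Then $r_M(I) = r_M(\varphi) \cap r_M(e) = r_M(\varphi) \cap (1-e)M = N$, while $r_S(I) = r_S(\varphi) \cap r_S(e) = eS \cap r_S(e) = 0$ (if $g = eh$ and $eg = 0$ then $g = eg = 0$). Since $M$ is quasi-retractable, $r_S(I) = 0$ forces $r_M(I) = 0$, i.e. $N = 0$, hence $r_M(\varphi) = eM$. When $M$ is merely retractable one can argue even more directly: if $N \neq 0$ then some $0 \neq \psi \in S$ has $Im(\psi) \subseteq N \subseteq (1-e)M \cap Ker(\varphi)$, so $e\psi = 0$ and $\varphi\psi = 0$; the latter puts $\psi \in r_S(\varphi) = eS$, whence $\psi = e\psi = 0$, a contradiction.

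The main obstacle is genuinely just this choice of $I$: one needs a left ideal whose module annihilator is precisely the unwanted summand $N$ and whose ring annihilator is zero, and $S\varphi + Se$ is tailored to do both. The remaining ingredients — the Peirce decomposition of $M$, the modular-law splitting of $r_M(\varphi)$, and the containment $eM \subseteq r_M(\varphi)$ — are routine and require no extra hypotheses on $M$.
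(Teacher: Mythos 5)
Your argument is correct: both directions check out, and the only delicate point---choosing the left ideal $I=S\varphi+Se$ so that $r_{M}(I)=N$ while $r_{S}(I)=eS\cap r_{S}(e)=0$, then invoking (quasi-)retractability (noting $Hom_{R}(M,r_{M}(I))\neq 0$ is the same as $r_{S}(I)\neq 0$), with the purely retractable case also handled directly---is done properly. The paper itself states this proposition only as a quotation of [\cite{lee}, Proposition 3.5] without reproducing a proof, and your argument is essentially the standard one from that reference, so there is nothing that diverges.
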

Recall that a module $M_{R}$ is said to be $k$-\textit{local}-\textit{retractable} if for any $\varphi\;\in End_{R}(M)$ and any nonzero element $m\in r_{M}(\varphi)=Ker(\varphi)\subseteq\;M$, there exists a homomorphism $\psi_{m}:M\rightarrow r_{M}(\varphi)$ such that $m\in \psi_{m}(M)\subseteq\;r_{M}(\varphi)$\cite{lee}. Every Rickart module is $k$-local-retractable, see [\cite{lee}, Proposition 3.7], but the converse is not always true, for example, The $\mathbb{Z}$-module $\mathbb{Z}_{4}$ is $k$-local-retractable but it is not Rickart.  
\begin{theorem}[\cite{lee}, Theorem 3.9]
The following condition are equivalent:
\begin{itemize}
\item[(a)]$M$ is a Rickart module.
\item[(b)]$End_{R}(M)$ is a right Rickart ring and $M$ is $k$-local-retractable.
\end{itemize}
\end{theorem}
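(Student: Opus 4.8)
The plan is to establish both implications directly from the definitions, leaning on Proposition~\ref{th7} for one direction. First I would prove $(a)\Rightarrow(b)$: if $M$ is Rickart then $S=\operatorname{End}_R(M)$ is a right Rickart ring by Proposition~\ref{th7}, and $M$ is $k$-local-retractable by the cited result [\cite{lee}, Proposition~3.7]; this direction is essentially free. The substance is in $(b)\Rightarrow(a)$. So suppose $S$ is a right Rickart ring and $M$ is $k$-local-retractable, and fix $\varphi\in S$. Since $S$ is right Rickart, $r_S(\varphi)=eS$ for some idempotent $e=e^2\in S$. The goal is to show $r_M(\varphi)=\operatorname{Ker}(\varphi)=eM$.

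One inclusion is immediate: since $e\in r_S(\varphi)$ we have $\varphi e=0$, so $eM\subseteq\operatorname{Ker}(\varphi)$. For the reverse inclusion I would take a nonzero $m\in\operatorname{Ker}(\varphi)=r_M(\varphi)$ and invoke $k$-local-retractability: there is $\psi_m\colon M\to r_M(\varphi)$ with $m\in\psi_m(M)\subseteq r_M(\varphi)$. Then $\varphi\psi_m=0$, i.e. $\psi_m\in r_S(\varphi)=eS$, so $\psi_m=e\psi_m$ and hence $\psi_m(M)=e\psi_m(M)\subseteq eM$. In particular $m\in eM$. This shows $\operatorname{Ker}(\varphi)\subseteq eM$, and combined with the previous inclusion gives $r_M(\varphi)=\operatorname{Ker}(\varphi)=eM$ with $e$ idempotent in $S$. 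As $\varphi$ was arbitrary, $M$ is Rickart.

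The main obstacle, such as it is, lies in recognizing that the $k$-local-retractability hypothesis is precisely what is needed to "pull down" the idempotent $e$ governing $r_S(\varphi)$ to a description of $r_M(\varphi)$: without some retractability condition, $eM$ could be strictly smaller than $\operatorname{Ker}(\varphi)$ (as the $\mathbb{Z}_{p^\infty}$ example preceding the theorem illustrates). The bookkeeping detail worth stating carefully is the passage $\psi_m\in r_S(\varphi)\Rightarrow \psi_m=e\psi_m\Rightarrow \psi_m(M)\subseteq eM$, which uses only that $r_S(\varphi)=eS$ and $e^2=e$; everything else is routine.
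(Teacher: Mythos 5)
Your proposal is correct: both implications are argued exactly as one would expect, with $(a)\Rightarrow(b)$ following from Proposition~\ref{th7} together with [\cite{lee}, Proposition~3.7], and $(b)\Rightarrow(a)$ by writing $r_{S}(\varphi)=eS$ and using $k$-local-retractability to show $Ker(\varphi)\subseteq eM$ (the step $\psi_m\in eS\Rightarrow\psi_m=e\psi_m\Rightarrow\psi_m(M)\subseteq eM$ is the right bookkeeping). The paper itself offers no proof of this statement, quoting it directly from [\cite{lee}, Theorem~3.9], and your argument is precisely the standard one from that source, so there is nothing to compare beyond noting agreement.
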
 
 
\begin{prop}
Let $M$ be an $R$-module such that its endomorphism ring is a regular. Then $M$ is Rickart, and consequently $S=End_{R}(M)$ is a right Rickart ring. If $M$ is Rickart and self-cogenerator module then $S$ is regular. 
\end{prop}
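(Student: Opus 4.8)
For the first implication I would argue directly. Suppose $S=End_{R}(M)$ is von Neumann regular and fix $\varphi\in S$; choose $\psi\in S$ with $\varphi\psi\varphi=\varphi$ and put $e=1-\psi\varphi$. Then $e^{2}=e$, while $\varphi e=\varphi-\varphi\psi\varphi=0$ gives $eM\subseteq r_{M}(\varphi)=Ker(\varphi)$, and $\varphi(m)=0$ forces $m=e(m)\in eM$; hence $r_{M}(\varphi)=eM$ with $e$ idempotent, so $M$ is Rickart. The remaining clause, that $S$ is then a right Rickart ring, is exactly Proposition~\ref{th7}.

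For the converse, assume $M$ is Rickart and a self-cogenerator, fix $\varphi\in S$, and aim to produce $\psi\in S$ with $\varphi\psi\varphi=\varphi$. Since $M$ is Rickart, $Ker(\varphi)=r_{M}(\varphi)=e_{1}M$ for some $e_{1}^{2}=e_{1}\in S$, so $M=Ker(\varphi)\oplus C$ with $C=(1-e_{1})M$; then $\varphi|_{C}\colon C\to Im(\varphi)$ is an isomorphism, since $Im(\varphi)=\varphi(C)$ and $C\cap Ker(\varphi)=0$. The plan is now to reduce everything to showing that $Im(\varphi)$ is a direct summand of $M$: if $M=Im(\varphi)\oplus D$ with projection $\pi\colon M\to Im(\varphi)$, then $\psi:=\iota_{C}\circ(\varphi|_{C})^{-1}\circ\pi\in S$ (with $\iota_{C}\colon C\hookrightarrow M$), and writing $m=k+c$ with $k\in Ker(\varphi)$, $c\in C$, one gets $\psi\varphi(m)=c$ and hence $\varphi\psi\varphi(m)=\varphi(c)=\varphi(m)$, so $\varphi\psi\varphi=\varphi$.

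It remains to show $Im(\varphi)\leq^{\oplus}M$, and this is where the self-cogenerator hypothesis enters. Writing $N=Im(\varphi)$, the left annihilator $l_{S}(N)=\{g\in S:gN=0\}$ equals $l_{S}(\varphi)=\{g\in S:g\varphi=0\}$. Since $M$ is a self-cogenerator it cogenerates $M/N$; pulling an embedding $M/N\hookrightarrow M^{\Lambda}$ back along the quotient map $M\to M/N$ produces a family $\{g_{\lambda}\}_{\lambda\in\Lambda}\subseteq l_{S}(\varphi)$ with $N=\bigcap_{\lambda}Ker(g_{\lambda})=r_{M}(l_{S}(\varphi))$. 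By the Rickart property each $Ker(g_{\lambda})=r_{M}(g_{\lambda})$ is a direct summand of $M$, so $N$ is an intersection of direct summands of $M$. To finish I would reduce this to a finite subfamily and invoke the summand intersection property of Rickart modules ([\cite{lee}, Proposition~2.16]); equivalently, one first proves that a Rickart self-cogenerator module is in fact a Baer module and then applies the Baer property to the left ideal $l_{S}(\varphi)$. Either way $N=Im(\varphi)\leq^{\oplus}M$, which together with the reduction above yields $\varphi\in\varphi S\varphi$ for every $\varphi\in S$, i.e. $S$ is von Neumann regular.

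The main difficulty will be exactly that last step: passing from ``$Im(\varphi)$ is an intersection of direct summands of $M$'' to ``$Im(\varphi)$ is a direct summand of $M$''. The summand intersection property only disposes of finite intersections, so the real content lies in replacing $l_{S}(\varphi)$ by a finitely generated (better, idempotent-generated) left ideal with the same annihilator in $M$, or in establishing the intermediate Baer-type statement. It is here that the self-cogenerator assumption is indispensable: for instance $\mathbb{Z}_{\mathbb{Z}}$ is Rickart but not a self-cogenerator, and $End_{\mathbb{Z}}(\mathbb{Z})=\mathbb{Z}$ is not regular, multiplication by $2$ having the non-summand image $2\mathbb{Z}$.
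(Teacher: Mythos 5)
Your first paragraph is correct and is essentially the paper's own argument: for $\varphi=\varphi\psi\varphi$ the element $e=1-\psi\varphi$ is an idempotent with $r_{M}(\varphi)=eM$ (the paper phrases this as $r_{M}(S\varphi)=r_{M}(\psi\varphi)=(1-\psi\varphi)M$), and the ``consequently'' clause is Proposition \ref{th7}, exactly as you say. Your reduction of the converse to the single statement $Im(\varphi)\leq^{\oplus}M$ is also sound: with $M=Ker(\varphi)\oplus C$ and a projection $\pi$ onto $Im(\varphi)$, the map $\psi=\iota_{C}(\varphi|_{C})^{-1}\pi$ does satisfy $\varphi\psi\varphi=\varphi$, which is the standard criterion (kernel and image both summands) that the paper uses implicitly.

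The converse as you present it, however, has a genuine gap, and you flag it yourself: you never prove that $Im(\varphi)$ is a direct summand. What you actually establish is only that $Im(\varphi)=r_{M}(l_{S}(\varphi))$ is an intersection of a possibly infinite family of kernels $Ker(g_{\lambda})$, each a direct summand by Rickartness; the SIP of [\cite{lee}, Proposition 2.16] disposes only of finite intersections, and neither of your proposed repairs is carried out. The suggestion that a Rickart self-cogenerator module is already Baer is a stronger-looking assertion that you do not prove and that follows from nothing earlier in the paper, while ``replacing $l_{S}(\varphi)$ by a finitely generated left ideal with the same annihilator in $M$'' is precisely the missing content rather than a reduction of it. The paper closes exactly this step by citing [\cite{wisbauer}, Proposition 39.11(4), p.~335], which for a self-cogenerator $M$ gives $Im(\varphi)\leq^{\oplus}M$ once $Ker(\varphi)\leq^{\oplus}M$; without that result (or an equivalent argument supplied in full) your proof of the second assertion is incomplete, even though the surrounding reductions are correct.
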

\begin{proof}
Take $\varphi\;\in\;S$. Since $S$ is regular, there exists $\psi\;\in\;S$ so that $\varphi=\varphi\psi\varphi$, thus $\psi\varphi=(\psi\varphi)(\psi\varphi)$ is an idempotent with the property that $S\varphi=S\psi\varphi$, then $r_{M}(S\varphi)=r_{M}(\psi\varphi)=(1-\psi\varphi)M\leqslant^{\oplus}M$. Hence $Ker(\varphi)\leqslant^{\oplus}M$.
Conversely, assume that $M$ is Rickart and self-cogenerator. Let $\varphi\in S$ , then $Ker(\varphi)\leqslant^{\oplus}M$ since $M$ is Rickart. By Proposition \ref{th7}, we have $S$ is a right Rickart ring. Since $M$ is self-cogenerator then by [\cite{wisbauer}, Proposition 39.11(4), page 335] we obtain $Im(\varphi)\leqslant^{\oplus}M$ and hence $S$ is regular. 
\end{proof}
We recall the following chart of basic implications.
\begin{table}[h]
\begin{tabular}{lllll}
 &&(Baer ring) &\\ 
 &&$\;\;\;\;\;\;\;\;\Downarrow$ &  \\
$(von\; Neumann\; regular)\Rightarrow$&$(right\; semihereditary)\Rightarrow$&$(right\; Rickart)\Rightarrow$&$(right\;nonsingular)$&  \\
\end{tabular}
\end{table}

We now show that all of the implications are irreversible. 
\begin{example}
Let $R$ be a commutative reduced ring which has exactly two idempotents $0$ and $1$, but which is not an integral domain. By [\cite{lam2}, exercise 7.23], if $R$ is a ring with exactly two idempotents $0$ and $1$, then $R$ is Baer if and only if  $R$ is Rickart if and only if $R$ is a domain. Then $R$ is not Rickart. More explicitly, take $R=\mathbb{Q}[a,a^{\prime}]$ with the relation $a a^{\prime}=0$. Using unique factorization  in the polynomial ring $R=\mathbb{Q}[x,x^{\prime}]$, we verify easily that $R$ is reduced, with $0$ and $1$ as its only idempotents. Since $a a^{\prime}=0$ and $a, a^{\prime}\neq 0$, $R$ is not integral domain and hence $R$ is not Rickart. Here the principal ideal $aR$ is not a projective and $r_{R}(a)=a^{\prime}R$ is not generated by an idempotent. By [\cite{lam}, Corollary 7.12], A commutative ring is nonsingular if and only if it is reduced. We conclude that $R$ is nonsingular which is not Rickart.
\end{example}
\begin{example}
Let $R=\mathbb{Z}[x]$. Since $\mathbb{Z}$ is not a von Neumann regular ring, then by \cite{victor}, $R$ is not semihereditary. However, $\mathbb{Z}$ is a reduced Rickart ring, then by Armendariz  [\cite{armendariz}, Theorem A], $R=\mathbb{Z}[x]$ is a Rickart ring. Thus $R$ is a Rickart ring that is not semihereditary.
\end{example}
\begin{example}
$\mathbb{Z}$ is a semihereditary ring but it is not von Neumann regular.  
\end{example}
\begin{example}\label{ex1}
Consider the ring $A=\prod_{i=1}^{\infty}\;\mathbb{Z}_{2}$ and let $R=\{(a_{n})_{n=1}^{\infty}\mid\; a_{n}\; \mbox{is eventually constant}\}$, is a subring of $A$, then $R$ is von Neumann regular ring, and hence it is Rickart but it is not Baer. To show $R$ is not Baer, let $e_{i}=(e_{i,j})$ denote the $i^{th}$ unit vector in $R$, that is, $e_{i,j}=1$ for $j=i$ and $e_{i,j}=0$ for $j\neq i$. Also, let $I$ be the ideal of $R$ generated by the set $\{e_{i} \mid i\;\mbox{is\;odd}\}$. Clearly, $r_{R}(I)$ consists of $e_{i}$ for $i$ even and sequences $a=(a_{n})$ which are eventually zero and $a_{n}=0$ for $n$ odd. If  $r_{R}(I)=aR$ for some $a=(a_{n})$ in $r_{R}(I)$, then there exists an integer $k$ such that $a_{n}=0$ for all $n\geq k$. Since $e_{2k}$ in $r_{R}(I)$, then there exists an element $r=(r_{n})$ in $R$ such that $e_{2k}=ar$. Thus $1=e_{2k,2k}=a_{2k} r_{2k}=0 r_{2k}=0$ which is a contradiction. Therefore, $r_{R}(I)$ cannot be generated by any element in $R$ and hence cannot be generated by any idempotent.  
\end{example}

We recall a simple known result.
\begin{lemma}\label{lemma}
A ring $R$ is right nonsingular if and only if the right annihilator of any subset $X$ of $R$ is always a closed right ideal.
\end{lemma}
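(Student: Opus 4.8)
The plan is to prove both directions of the equivalence in Lemma \ref{lemma} using the fact that a right ideal $K$ is closed if and only if it has no proper essential extension inside $R_R$, together with the characterization of the singular submodule $Z(R_R)$ as the set of elements whose right annihilator is essential.

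First I would prove the ``only if'' direction. Assume $R$ is right nonsingular and let $X\subseteq R$ be arbitrary; set $K=r_R(X)$, which is a right ideal. To show $K$ is closed, suppose $K\leqslant^{e} L$ for some right ideal $L$; I must show $K=L$. Take $a\in L$. For each $x\in X$, consider the map $R_R\to R_R$ given by right multiplication by $xa$ restricted appropriately: more precisely, since $K\leqslant^e L$, the right ideal $\{r\in R : ar\in K\}$ is essential in $R_R$ (this is the standard pullback-of-an-essential-submodule fact applied to the map $R\to L/K$, $r\mapsto ar+K$, whose kernel is essential because $L/K$ is singular... but actually $L/K$ need not be singular a priori). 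Let me instead argue directly: for any $r$ in the essential right ideal $(K:a)=\{r: ar\in K\}$ we have $xar=0$ for all $x\in X$, so $xa\cdot(K:a)=0$, meaning $r_R(xa)\supseteq (K:a)$ is essential in $R_R$, hence $xa\in Z(R_R)=0$. Thus $xa=0$ for all $x\in X$, i.e. $a\in r_R(X)=K$. Therefore $L=K$ and $K$ is closed.

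Next I would prove the ``if'' direction by contraposition, or directly: assume every right annihilator is closed, and show $Z(R_R)=0$. Let $a\in Z(R_R)$, so $r_R(a)$ is essential in $R_R$. Now $r_R(a)=r_R(\{a\})$ is a right annihilator, hence closed by hypothesis; but $r_R(a)\leqslant^e R_R$ and the only closed right ideal that is essential in $R_R$ is $R_R$ itself (a closed submodule has no proper essential extension, and $R_R$ is an essential extension of it). Hence $r_R(a)=R$, which forces $a\cdot 1=0$, i.e. $a=0$. Therefore $Z(R_R)=0$ and $R$ is right nonsingular.

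The main obstacle is the bookkeeping in the ``only if'' direction, specifically verifying cleanly that $(K:a)=\{r\in R: ar\in K\}$ is essential in $R_R$ whenever $K\leqslant^e L$ and $a\in L$; this is the one genuinely load-bearing lemma (essentiality pulls back along the homomorphism $R\to L$, $r\mapsto ar$), and once it is in hand the nonsingularity hypothesis immediately kills $xa$ for each $x\in X$. Everything else is routine manipulation of annihilators and the definition of closed submodule, so I would state that pullback fact explicitly (or cite it from \cite{lam}) and keep the rest terse.
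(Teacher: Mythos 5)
Your proof is correct. Note that the paper offers no argument of its own here --- it simply ``recalls'' the lemma as a known result --- so there is nothing to diverge from; your argument is the standard one, and both halves are sound: the pullback fact you isolate (for a right ideal $L$, $a\in L$, and $K\leqslant^{e}L$, the set $(K:a)=\{r\in R: ar\in K\}$ is the preimage of $K$ under $r\mapsto ar$ and hence essential in $R_R$) is exactly the load-bearing step, and it is a standard cited fact in \cite{lam}, while the converse correctly combines ``closed'' with ``essential'' to force $r_R(a)=R$ and hence $a=0$.
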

In \cite{osofsky}, B.L. Osofsky showed that the endomorphism ring of a quasi-injective module is right self-injective ring hence it is right extending ring. In \cite{faith}, It was proved by Faith and Utumi that if $M$ is a quasi-injective module and $S = End_{R}(M)$, then $J(S)$ consists of all endomorphisms of $M$ having essential kernel and $S/J(S)$ is a von Neumann regular ring. 
We obtain the following theorem which is an extending to [\cite{lam}, Theorem 7.25(1), p. 262]. 
\begin{theorem}
Let $M$ be a quasi-injective $R$-module and $S=End_{R}(M)$. The following statements are equivalent:
\begin{itemize}
\item[(1)]$M$ is a Baer module.
\item[(2)]$M$ is a Rickart module.
\item[(3)]$S$ is a von Neumann regular ring.
\item[(4)]$S$ is a right semihereditary ring. 
\item[(5)]$S$ is a right Rickart ring.
\item[(6)]$S$ is a right nonsingular ring. 
\end{itemize}
\end{theorem}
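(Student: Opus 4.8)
The strategy is to verify a cycle of implications among $(2)$--$(6)$ using ingredients already assembled above, and to treat $(1)\Leftrightarrow(2)$ separately. The painless links come first. Since the Baer condition for $S$ applied to the principal left ideals $S\varphi$ is exactly the Rickart condition, $(1)\Rightarrow(2)$ is immediate. The implication $(2)\Rightarrow(5)$ is Proposition~\ref{th7}, and $(3)\Rightarrow(2)$ is the content of the (unlabelled) Proposition preceding the implication diagram: a module whose endomorphism ring is von Neumann regular is Rickart. Reading the diagram of basic implications at the ring level gives $(3)\Rightarrow(4)\Rightarrow(5)\Rightarrow(6)$. Thus the whole equivalence of $(2)$--$(6)$ reduces to the single arrow $(6)\Rightarrow(3)$.

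For $(1)\Leftrightarrow(2)$ I would use that a quasi-injective module is extending (quasi-injective $\Rightarrow$ quasi-continuous $\Rightarrow$ extending), together with two observations: a Rickart module is $K$-nonsingular, because if $\operatorname{Ker}(\varphi)=eM$ is essential in $M$ then the direct summand $eM$ is essential, hence equals $M$, so $\varphi=0$; and a $K$-nonsingular extending module is a Baer module by~\cite{cosmin}. Hence for the quasi-injective module $M$ the Baer and Rickart properties coincide, which yields $(2)\Rightarrow(1)$, the reverse implication having been noted already. This is the promised instance in which the Baer and Rickart properties agree.

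The crux is $(6)\Rightarrow(3)$: for quasi-injective $M$, right nonsingularity of $S$ must be boosted to von Neumann regularity. Here I would invoke the structural facts recalled just before the statement. By Osofsky's theorem $S$ is right self-injective, hence right extending; and by Lemma~\ref{lemma}, hypothesis $(6)$ makes every right annihilator in $S$ a closed right ideal, so every such annihilator is in fact a direct summand of $S_S$. In particular, for each $\varphi\in S$ the left multiplication map has $r_S(\varphi)=gS$ with $g^2=g$, so $S/r_S(\varphi)\cong(1-g)S$ is a direct summand; since a right self-injective ring is right continuous, the isomorphic copy $\varphi S$ is again a direct summand of $S_S$. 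From $r_S(\varphi)$ and $\varphi S$ both being direct summands one extracts $\psi\in S$ with $\varphi\psi\varphi=\varphi$ (split the induced isomorphism $(1-g)S\xrightarrow{\ \sim\ }\varphi S$ and extend by $0$), so $S$ is von Neumann regular. Equivalently, one may argue that $S$ right self-injective forces $J(S)=Z(S_S)$, which is $0$ by $(6)$, whence $S=S/J(S)$ is von Neumann regular by the Faith--Utumi theorem. Assembling the arrows, $(2)\Rightarrow(5)\Rightarrow(6)\Rightarrow(3)\Rightarrow(2)$ closes a cycle, $(3)\Rightarrow(4)\Rightarrow(5)$ inserts $(4)$, and $(1)\Leftrightarrow(2)$ completes the proof.

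I expect the step $(6)\Rightarrow(3)$ to be the only genuine obstacle: one cannot conclude directly from quasi-injectivity that $M$ itself is $K$-nonsingular, so the argument really has to pass through the self-injectivity of the endomorphism ring (its right continuity, or the identification of $J(S)$ with the right singular ideal together with the Faith--Utumi regularity of $S/J(S)$). The remaining steps are routine bookkeeping with results already in place.
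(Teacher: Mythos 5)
Your proof is correct, but it follows a genuinely different route from the paper's. The paper runs the single cycle $(1)\Rightarrow(2)\Rightarrow(3)\Rightarrow(4)\Rightarrow(5)\Rightarrow(6)\Rightarrow(1)$, and its two substantive arrows are exactly the ones you avoid: for $(2)\Rightarrow(3)$ it applies Faith--Utumi directly to the quasi-injective module $M$, noting that $J(S)=\{f\in S\mid \operatorname{Ker}(f)\leq^{e}M\}$, so the Rickart property (kernels are direct summands) forces $J(S)=0$ and $S\cong S/J(S)$ is regular; for $(6)\Rightarrow(1)$ it takes an arbitrary submodule $N\leq M$, uses Lemma~\ref{lemma} to see that $r_{S}l_{S}(N)$ is a closed right ideal, invokes Osofsky to get $S$ right self-injective and hence right extending, so $r_{S}l_{S}(N)=eS$ and $l_{S}(N)=l_{S}r_{S}l_{S}(N)=S(1-e)$, giving the Baer property of $M$ in one stroke. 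You instead close the cycle $(2)\Rightarrow(5)\Rightarrow(6)\Rightarrow(3)\Rightarrow(2)$ using Proposition~\ref{th7}, the ring-level chart, and the regular-endomorphism-ring proposition, and you put all the weight on $(6)\Rightarrow(3)$, which you settle by standard structure theory of right self-injective rings (the C2/continuity argument making $\varphi S$ a summand, or equivalently $J(S)=Z(S_{S})=0$ plus Faith--Utumi); then you handle $(1)\Leftrightarrow(2)$ separately via Rickart $\Rightarrow$ $K$-nonsingular, quasi-injective $\Rightarrow$ extending, and the Rizvi--Roman theorem that extending $K$-nonsingular modules are Baer. Both arguments are sound and lean on Osofsky and Faith--Utumi in some form; the trade-off is that your version imports more external machinery (the C2 condition or Utumi's identification $J=Z$, and the $K$-nonsingular-plus-extending criterion from \cite{cosmin}), whereas the paper's version stays closer to the tools it has already recorded -- in particular its $(2)\Rightarrow(3)$ is a direct one-line use of Faith--Utumi on $M$, and its $(6)\Rightarrow(1)$ reaches the Baer condition for arbitrary left ideals of $S$ without passing through regularity of $S$ or the Baer/extending theory of modules. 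Your observation that the Baer condition applied to principal left ideals $S\varphi$ gives $(1)\Rightarrow(2)$ matches the paper's (unelaborated) ``clear'' step, and your $(6)\Rightarrow(3)$ argument is complete as sketched.
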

\begin{proof}
$(1)\Rightarrow(2)$ It is clear.\newline
$(2)\Rightarrow(3)$ Since $M$ is quasi-injective, then by [\cite{faith} or \cite{lam}, Theorem 13.1, p.\;359], the Jacobson radical of $S$, $J(S)=\{f\in S|Ker(f)\leqslant^{e}M\}$ and $S/J(S)$ is a von Neumann regular ring. Since $M$ is Rickart then for each $f\in S$ implies $Ker(f)\leqslant^{\oplus}M$. Thus we have for all $f\in J(S)$ implies $f=0$ so $J(S)=0$ and hence $S/J(S)=S/0\simeq S$. Therefore, $S$ is a von Neumann regular ring.\newline
$(3)\Rightarrow(4)\Rightarrow(5)\Rightarrow(6)$ It is clear.\newline
$(6)\Rightarrow(1)$ Let $N$ be a submodule  of $M$. By Lemma \ref{lemma}, $r_{S}l_{S}(N)$ is closed right ideal in $S$. Since $M$ is quasi-injective, then by \cite{osofsky}, $S$ is a right self-injective ring and hence it is a right extending ring. Thus $r_{S}l_{S}(N)=eS$ for some $e=e^{2}\in S$. Therefore, $l_{S}(N)=l_{S}(r_{S}(l_{S}(N)))=S(1-e)$, thus $M$ is Baer.
\end{proof}
\begin{corollary}[\cite{lam}, Theorem 7.52(1), p. 262]
Let $R$ be any right self-injective ring. Then the following statements are equivalent:
\begin{itemize}
\item[(1)]$R$ is a Baer ring.
\item[(2)]$R$ is a von Neumann regular ring.
\item[(3)]$R$ is a right semihereditary ring. 
\item[(4)]$R$ is a right Rickart ring.
\item[(5)]$R$ is a right nonsingular ring. 
\end{itemize}
\end{corollary}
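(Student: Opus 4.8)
The plan is to obtain this Corollary as the special case $M = R_R$ of the preceding Theorem. The first step is to check that the Theorem applies: since $R$ is right self-injective, $R_R$ is an injective, hence quasi-injective, right $R$-module. The second step is to recall the standard ring isomorphism $S = End_R(R_R) \cong R$ given by sending $a \in R$ to the left multiplication map $\lambda_a \colon x \mapsto ax$. Under this identification, a left ideal of $S$ corresponds to a left ideal of $R$, and for $\varphi = \lambda_a \in S$ the submodule $r_M(\varphi)$ of $M = R_R$ is precisely the right annihilator $r_R(a)$ in $R$; similarly $eM = eR$ for an idempotent $e$.

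The third step is to translate the two module-theoretic conditions of the Theorem into ring-theoretic ones. For (b) $\Leftrightarrow$ (2) of the Theorem: $R_R$ is a Rickart module iff for every $a \in R$ we have $r_R(a) = eR$ for some idempotent $e$, which is exactly the statement that $R$ is a right Rickart ring, i.e. condition (4) of the Corollary. For (1) of the Theorem: $R_R$ is a Baer module iff the right annihilator in $R$ of every left ideal is generated by an idempotent; since the right annihilator of any subset $X$ coincides with the right annihilator of the left ideal generated by $X$, this says $R$ is a right Baer ring, and by the left–right symmetry of Baer rings recorded in the introduction this is just ``$R$ is a Baer ring'', condition (1) of the Corollary. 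Conditions (3), (4), (5), (6) of the Theorem mention only $S \cong R$ and transcribe verbatim to conditions (2), (3), (4), (5) of the Corollary, with the Theorem's conditions (2) and (5) both collapsing onto Corollary (4).

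Putting these identifications together with the Theorem gives the equivalence of (1)--(5). I do not expect a genuine obstacle here: the content is entirely carried by the Theorem, and the only care needed is in the bookkeeping of the isomorphism $S \cong R$ and in invoking the symmetry of the Baer condition so that ``right Baer ring'' may be stated simply as ``Baer ring'' in item (1).
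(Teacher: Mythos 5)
Your proposal is correct and matches the paper's intent: the corollary is stated as an immediate specialization of the preceding theorem to $M=R_R$, using right self-injectivity of $R$ to get quasi-injectivity and the identification $End_R(R_R)\cong R$ to transcribe the six module/endomorphism-ring conditions into the five ring conditions (with the theorem's Rickart-module and right-Rickart-ring conditions collapsing to one). Your bookkeeping, including the use of left--right symmetry of the Baer condition, is exactly the argument the paper leaves implicit.
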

L. Small proved if a ring $R$ has no infinite set of nonzero orthogonal idempotents, then $R$ is a Baer ring if and only if $R$ is a right Rickart ring. Example \ref{ex1} is an illustrated example which shows there is no equivalency between Baer property and Rickart property if the ring has infinite set of nonzero orthogonal idempotents.    

In his Master thesis 2008 \cite{ali}, Ali Al-Saedi extended Small's result to the general module theoretic settings as the following: Let $M$ be a right $R$-module such that $S=End_{R}(M)$ has no infinite set of non-zero orthogonal idempotents. Then $M$ is a Baer module if and only if $S$ is a right Rickart ring.
Independently in \cite{lee}, G. Lee, T. Rizvi, and C. Roman  proved an analogue result as the following: Let $M$ be a right $R$-module and let $S=End_{R}(M)$ have no infinite set of non-zero orthogonal idempotents. Then $M$ is a Baer module if and only if $M$ is a  Rickart module. We combine those two results to get the next theorem.
\begin{theorem}
Let $M$ be a right $R$-module such that $S=End_{R}(M)$ has no infinite set of non-zero orthogonal idempotents. Then the following statements are equivalent:
\begin{itemize}
\item[(1)]$M$ is a Baer module.
\item[(2)]$M$ is a Rickart module.
\item[(3)]$S$ is a right Rickart ring.
\end{itemize}
\end{theorem}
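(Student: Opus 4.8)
The plan is to close the cycle $(1)\Rightarrow(2)\Rightarrow(3)\Rightarrow(1)$, taking the first two arrows from facts already available and putting all the weight on the last one, where the hypothesis that $S=End_{R}(M)$ has no infinite set of nonzero orthogonal idempotents is actually needed. The arrow $(1)\Rightarrow(2)$ is immediate: applying the Baer condition of $M$ to the principal left ideal $S\varphi$ gives $r_{M}(\varphi)=r_{M}(S\varphi)=eM$ with $e^{2}=e\in S$ for every $\varphi\in S$, so $M$ is Rickart, with no condition on $S$ required. The arrow $(2)\Rightarrow(3)$ is exactly Proposition \ref{th7} ([\cite{lee}, Proposition 3.2]): a Rickart module has a right Rickart endomorphism ring, again with no finiteness hypothesis in play.

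For $(3)\Rightarrow(1)$ I would appeal to the module-theoretic extension of Small's theorem recorded just above the statement: under the standing finiteness hypothesis on $S$, a right Rickart endomorphism ring already forces $M$ to be a Baer module (Al-Saedi's thesis \cite{ali}); equivalently one may combine the Lee--Rizvi--Roman equivalence ``$M$ Baer $\iff$ $M$ Rickart'' under the same hypothesis \cite{lee} with a proof that $(3)$ implies $M$ is Rickart. If instead one wants a self-contained argument, the Small-type route is: since $S$ is right Rickart, $S_{S}$ is a Rickart module and hence has the summand intersection property ([\cite{lee}, Proposition 2.16]), so finite intersections of direct summands of $S_{S}$ are again direct summands; then for any subset $X\subseteq S$ the right annihilator $r_{S}(X)=\bigcap_{x\in X}r_{S}(x)$ is a downward-directed intersection of direct summands, the finiteness hypothesis forces the family of finite partial intersections to stabilize, and so $r_{S}(X)=eS$ for an idempotent $e$; thus $S$ is Baer, and the conclusion is then transferred to $M$.

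The main obstacle is $(3)\Rightarrow(1)$, and inside it the subtle step is to extract a genuinely \emph{orthogonal} (not merely independent) infinite family of nonzero idempotents from a hypothetical strictly descending chain $A_{1}\supsetneq A_{2}\supsetneq\cdots$ of direct summands with $A_{n}=A_{n+1}\oplus B_{n+1}$ and $B_{n+1}\neq 0$: one must choose the complements compatibly so that the associated projections in $S$ are pairwise orthogonal, which contradicts the hypothesis and yields the stabilization. Once this combinatorial core is in place — or imported wholesale from \cite{ali} or \cite{lee} — the remaining passages between ``Baer'', ``Rickart'', and ``right Rickart'' are routine bookkeeping.
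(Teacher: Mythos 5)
Your citation-based skeleton is essentially the paper's proof: the paper gives no argument beyond combining the two quoted results, namely $(1)\Leftrightarrow(2)$ from \cite{lee} and $(1)\Leftrightarrow(3)$ from \cite{ali}, so arranging these as the cycle $(1)\Rightarrow(2)\Rightarrow(3)\Rightarrow(1)$, with $(1)\Rightarrow(2)$ trivial, $(2)\Rightarrow(3)$ given by Proposition \ref{th7}, and $(3)\Rightarrow(1)$ imported wholesale from \cite{ali}, reproduces what the paper does.

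The genuine gap is in your proposed self-contained route for $(3)\Rightarrow(1)$. The Small-type stabilization argument does show that a right Rickart ring with no infinite set of nonzero orthogonal idempotents is a Baer ring, and your worry about extracting truly orthogonal idempotents from a descending chain of summands is the right technical point there. But the closing step, ``the conclusion is then transferred to $M$,'' is not routine bookkeeping: in general ``$S$ is a Baer (or right Rickart) ring'' does not descend to ``$M$ is a Baer module'' without a retractability-type hypothesis on $M$ (compare the (quasi-)retractability needed in [\cite{lee}, Proposition 3.5] and the $k$-local-retractability in [\cite{lee}, Theorem 3.9]). The paper's own example exhibits the obstruction under exactly the present hypotheses: for $M=\mathbb{Z}_{p^{\infty}}$ over $\mathbb{Z}$, $S=End_{\mathbb{Z}}(M)$ is the ring of $p$-adic integers, a commutative domain whose only idempotents are $0$ and $1$; thus $S$ is a right Rickart (indeed Baer) ring with no infinite set of nonzero orthogonal idempotents, yet $M$ is not Rickart, let alone Baer. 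Hence no argument can establish $(3)\Rightarrow(1)$ (or $(3)\Rightarrow(2)$) from the stated hypotheses alone; the result you defer to \cite{ali} (and, as printed, the theorem itself insofar as it lists (3) among the equivalent conditions) must implicitly carry an extra assumption such as (quasi-)retractability of $M$. What survives without additional hypotheses is the equivalence $(1)\Leftrightarrow(2)$ --- whose proof is precisely the orthogonal-idempotent extraction you describe, carried out on direct summands of $M$ via idempotents of $S$ rather than on $S_{S}$ --- together with the one-way implication $(2)\Rightarrow(3)$.
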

\begin{corollary}[\cite{lam},Theorem 7.55, p. 263]
Let $R$ be any ring that has no infinite set of non-zero orthogonal idempotents. Then the following statements are equivalent:
\begin{itemize}
\item[(1)]$R$ is Baer.
\item[(2)]$R$ is right Rickart.
\end{itemize}
\end{corollary}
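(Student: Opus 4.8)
The plan is to deduce this ring-theoretic corollary from the preceding theorem by specializing it to the regular module $M = R_R$. First I would recall the standard ring isomorphism $S = End_R(R_R) \cong R$: every $R$-endomorphism $\varphi$ of $R_R$ is left multiplication by $a := \varphi(1)$, since $\varphi(x) = \varphi(1\cdot x) = \varphi(1)x = ax$; moreover composition corresponds to multiplication in $R$ taken in the same order, so $\varphi \mapsto \varphi(1)$ is a ring isomorphism $S \cong R$ (not $R^{op}$). Under this identification, the hypothesis in the theorem that $S$ has no infinite set of nonzero orthogonal idempotents is exactly the hypothesis on $R$ in the corollary.

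Next I would translate the two module-theoretic properties into ring-theoretic ones. For $\varphi \in S$ corresponding to $a \in R$, one has $r_M(\varphi) = \{x \in R : ax = 0\} = r_R(a)$; hence $M = R_R$ is a Rickart module precisely when, for every $a \in R$, the right annihilator $r_R(a)$ is of the form $eR$ for some idempotent $e$, which is exactly the definition of $R$ being right Rickart. Likewise, a left ideal $I$ of $S$ corresponds to a left ideal $J$ of $R$, and then $r_M(I) = \{x \in R : ax = 0 \text{ for all } a \in J\} = r_R(J)$. Since the right annihilator of any subset coincides with the right annihilator of the left ideal it generates, $M = R_R$ is a Baer module precisely when every right annihilator in $R$ is generated by an idempotent, i.e. when $R$ is right Baer; and by the symmetry of the Baer condition recorded in the introduction this is the same as $R$ being Baer.

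Finally, I would apply the preceding theorem to $M = R_R$. Its equivalence of (1) and (2) asserts that $M$ is a Baer module if and only if $M$ is a Rickart module, and after the two translations above this reads exactly as: $R$ is Baer if and only if $R$ is right Rickart, which is the corollary.

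The work here is entirely a matter of careful bookkeeping rather than a genuine obstacle: the only points needing attention are verifying that endomorphisms of $R_R$ compose in the correct order so that $S \cong R$ rather than $R^{op}$, and confirming that the annihilator correspondence $r_M \leftrightarrow r_R$ is exact on both elements and left ideals. All of the substantive content is already supplied by the preceding theorem.
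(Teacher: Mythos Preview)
Your proposal is correct and follows precisely the intended route: the paper states this corollary without proof, as an immediate specialization of the preceding theorem to $M=R_{R}$, and your careful translation via $S=End_{R}(R_{R})\cong R$ together with the identifications $r_{M}(\varphi)=r_{R}(a)$ and $r_{M}(I)=r_{R}(J)$ is exactly what that specialization entails.
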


\section{Acknowledgements}
Deep thanks to Professor Holly Swisher for editing this paper. Without her support and advising, this paper would not be done.
\\
\\

\bibliographystyle{plain}
\bibliography{mybib}

\end{document}